\newtheoremstyle{thm}
{9pt}
{9pt}
{\itshape}
{}
{\bfseries}
{.}
{ }
{}
\theoremstyle{thm}
\DeclareMathAlphabet{\mathscr}{OT1}{pzc}{m}{it}
\newtheorem{theorem}{Theorem}[section]
\newtheorem{lemma}[theorem]{Lemma}
\newtheorem{corollary}[theorem]{Corollary}
\newtheoremstyle{def}
{9pt}
{9pt}
{}
{}
{\bfseries}
{.}
{ }
{}
\theoremstyle{def}
\newtheorem{remark}[theorem]{Remark}
\newcommand{\R}{\mathbb{R}} 
\newcommand{\N}{\mathbb{N}} 
\newcommand{\E}{\mathbb{E}} 
\newcommand{\PP}{\mathbb{P}} 
\renewcommand{\footnoterule}{%
	\kern -3.5pt
	\hrule width \textwidth height 1pt
	\kern 3.5pt
}
\def\blfootnote{\xdef\@thefnmark{}\@footnotetext}
\title{A goodness-of-fit test for the Zeta distribution with unknown parameter}
\author{Bruno Ebner\\
Institute of Stochastics, \\
Karlsruhe Institute of Technology (KIT), \\
Englerstr. 2, 76133 Karlsruhe, Germany. \\
\href{mailto:Bruno.Ebner@kit.edu}{Bruno.Ebner@kit.edu}\\
\And
Daniel Hlubinka\\
Department of Probability and Mathematical Statistics,\\
Faculty of Mathematics and Physics, Charles University,\\
Sokolovská 83, 18675 Praha, Czech Republic. \\
\href{mailto:daniel.hlubinka@matfyz.cuni.cz}{daniel.hlubinka@matfyz.cuni.cz}
}
\begin{document}

\date{\today}
\maketitle

\blfootnote{ {\em MSC 2010 subject
classifications.} Primary 62G10 Secondary 62E10}
\blfootnote{{\em Key words and phrases} Goodness-of-fit; Zeta distribution; discrete data; Hilbert-space valued random elements; parametric bootstrap}

\begin{abstract}
We introduce a new goodness-of-fit test for count data on $\N$ for the Zeta distribution with unknown parameter. The test is built on a Stein-type characterization that uses, as Stein operator, the infinitesimal generator of a birth–death process whose stationary distribution is Zeta. The resulting
$L^2$-type statistic is shown to be omnibus consistent, and we establish the limit null behavior as well as the validity of the associated parametric bootstrap procedure. In a Monte Carlo simulation study, we compare the proposed test with the only existing Zeta-specific procedure of \citet{M:2009}, as well as with more general competitors based on empirical distribution functions, kernel Stein discrepancies and other Stein-type characterizations.    \end{abstract}

\section{Introduction}\label{sec:Intro}
In this paper we study goodness-of-fit testing for the Zeta (also called Riemann–Zeta, Zipf or discrete Pareto)
family of distributions. This one-parameter family emerges as a canonical model for
heavy-tailed count data on $\mathbb{N}$, with probability mass function
\[
  p_s(k) = \frac{k^{-s}}{\zeta(s)}, \qquad k \in \mathbb{N}, \; s>1,
\]
where $\zeta(s) = \sum_{k=1}^\infty k^{-s}$, $s>1$, denotes the Riemann zeta function. Zeta and related Zipf-type laws are widely
used in applications where empirical frequencies decay approximately like a power of the
rank, such as word counts in natural language, city-size distributions, and degree
distributions in large networks; see, for example,
\citet{Pareto1896,Lotka1926,Zipf1949,Mandelbrot1961,Gabaix1999,BarabasiAlbert1999,ClausetShaliziNewman2009,Newman2005}.
Despite this broad range of applications, methodological work on formal goodness-of-fit
tests specifically tailored to the Zeta family is rather limited. General omnibus procedures
based on empirical distribution functions or tail fitting can be adapted to the Zeta model,
but only few tests exploit its particular structure, and the behavior of different
procedures under realistic alternatives has, to date, not been systematically investigated.
The aim of this paper is to fill this gap by developing a new procedure including it's theory, and by comparing several Stein-based, Mellin-transform-based, and characterization-based tests for the Zeta law.

To be specific, we denote the Zeta family of distributions by $\mathcal{Z}=\{ \mathrm{Zeta}(s): s>1\}$, where $s$ is a shape parameter. Note that the moments of the Zeta law only exist for $s>2$ (mean) and $s>3$ (variance), so we potentially have a heavy tailed discrete distribution. This fact is one reason inference must be tail-aware \citep{JohnsonKotzKemp2005,Newman2005}. Let $X_1,\ldots,X_n$ be independent and identically distributed copies of a random variable $X$ taking values in $\mathbb{N}$ and denote the distribution of $X$ by $\mathbb{P}^X$. The testing problem of interest is to test the composite hypothesis
\begin{equation}\label{eq:H0}
    H_0:\,\mathbb{P}^X\in\mathcal{Z}
\end{equation}
against general alternatives based on the sample $X_1,\ldots,X_n$.

The existing literature on this testing problem is relatively limited. General omnibus procedures based on the empirical distribution function, such as Kolmogorov–Smirnov and Cramér-von Mises type tests proposed in \cite{H:1996}, can be adapted to the present setting. Kernelized Stein discrepancy tests for discrete sample spaces are developed in \cite{Yang:2018}, but they are formulated only for simple null hypotheses on finite sample spaces, and therefore require suitable modifications to address the composite hypothesis in \eqref{eq:H0}. In \cite{BEN:2022}, the authors propose general tests derived from characterizations of probability mass functions, which are also applicable to this problem. To the best of our knowledge, the only procedure specifically tailored to the zeta family of distributions is that of \cite{M:2009}, where the empirical Mellin transform of inverse moments is incorporated into a weighted $L^2$-type statistic.

We propose a new competitor to \cite{M:2009} based on a Stein characterization of the Zeta law using the generator approach, see \cite{B:1988,B:1990}, by providing a birth and death process whose stationary distribution is precisely the Zeta$(s)$ distribution and which then induces using the infinitesimal generator a suitable Stein operator. Note that this is the first goodness-of-fit test that uses the generator approach to provide the Stein characterization and then we follow \cite{Aetal:2023}, Section 5.2, to propose a weighted $L^2$-type test statistic. Firstly, consider a birth–death process \( (Y_t)_{t\ge0} \) on \( \mathbb{N} \) with birth and death rates
\[
\lambda_k = 1, \qquad
\mu_k = \left(\frac{k}{k-1}\right)^{s}, \quad k \ge 2,
\]
and \(\mu_1 = 0\). These rates satisfy the detailed balance condition
\[
p_s(k)\lambda_k = p_s(k+1)\mu_{k+1}, \qquad k\ge1,
\]
so that \(P_s= \mathrm{Zeta}(s)\) is the unique stationary (and reversible) distribution of this Markov chain. The infinitesimal generator $L_s$ of the process acts on functions \( f:\mathbb{N}\to\mathbb{R} \) as
\begin{equation}\label{eq:SOP}
L_s f(k)
= \lambda_k\big(f(k+1)-f(k)\big)
+ \mu_k\big(f(k-1)-f(k)\big), \quad k\ge1,
\end{equation}
with the convention that the second term vanishes for \(k=1\), i.e.
\[
L_s f(1) = f(2) - f(1),
\qquad
L_s f(k)
= \big(f(k+1) - f(k)\big)
+ \left(\frac{k}{k-1}\right)^{s}\big(f(k-1) - f(k)\big), \quad k \ge 2.
\]
Following the generator approach \citep{B:1988,B:1990}, this infinitesimal generator is a Stein operator for the stationary distribution, see also Section 1 in \cite{ER:2008}. In our case this leads to the following characterization of the Zeta distribution.
\begin{theorem}\label{thm:char1}
Let $X \sim P_s = \mathrm{Zeta}(s)$, for some $s>0$. Then we have for all functions \( f:\mathbb{N}\to\mathbb{R} \) such that the expectations below exist,
\[
\mathbb{E}_{P_s}\big[L_s f(X)\big] = 0.
\]
Conversely, if a probability measure \(P\) on \(\mathbb{N}\) satisfies
\(\mathbb{E}_P[L_s f(X)] = 0\) for all bounded \(f\),
then \(P = P_s\).
\end{theorem}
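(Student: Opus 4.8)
The plan is to prove the two implications separately, both resting on the detailed balance relation $p_s(k)\lambda_k = p_s(k+1)\mu_{k+1}$ recorded above, or equivalently --- since $\lambda_k = 1$ --- on the elementary identity $p_s(k)\mu_k = p_s(k-1)$ for $k\ge2$, i.e.\ $k^{-s}\big(k/(k-1)\big)^s = (k-1)^{-s}$. For the direct part I would first rewrite the generator in the uniform form $L_s f(k) = \lambda_k\, g(k) - \mu_k\, g(k-1)$, $k\ge1$, where $g(k) := f(k+1)-f(k)$ and $\mu_1 = 0$, so that the boundary case $L_s f(1) = f(2)-f(1)$ is included automatically. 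The partial sums then telescope: using $p_s(k)\mu_k = p_s(k-1)$,
\[
\sum_{k=1}^{N} p_s(k)\, L_s f(k) = \sum_{k=1}^{N} p_s(k)\, g(k) - \sum_{k=2}^{N} p_s(k-1)\, g(k-1) = p_s(N)\, g(N) = p_s(N)\big(f(N+1)-f(N)\big).
\]
Hence $\mathbb{E}_{P_s}[L_s f(X)] = \lim_{N\to\infty} p_s(N)\big(f(N+1)-f(N)\big)$, and this limit is $0$ under the stated integrability: for instance $\sum_{k\ge1} p_s(k)\,|f(k+1)-f(k)| < \infty$ forces the general term $p_s(N)\,|f(N+1)-f(N)|$ to vanish. (Alternatively one splits the series directly; the two increment expectations $\mathbb{E}_{P_s}[\,|f(X+1)-f(X)|\,]$ and $\mathbb{E}_{P_s}[\,\mu_X|f(X-1)-f(X)|\,]$ are equal by $p_s(k)\mu_k = p_s(k-1)$, so finiteness of either one legitimizes the rearrangement.)

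For the converse I would test the hypothesis against the bounded indicator functions $f = \mathbf{1}_{\{m\}}$, $m\ge1$, and set $q(k) := P(\{k\})$. A short computation gives
\[
\mathbb{E}_{P}\big[L_s \mathbf{1}_{\{m\}}(X)\big] = q(m-1)\lambda_{m-1}\mathbf{1}_{\{m\ge2\}} - q(m)\big(\lambda_m+\mu_m\big) + q(m+1)\mu_{m+1},
\]
so the assumption is equivalent to the linear system $q(m-1)\lambda_{m-1}\mathbf{1}_{\{m\ge2\}} + q(m+1)\mu_{m+1} = q(m)\big(\lambda_m+\mu_m\big)$ for every $m\ge1$. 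Introducing $h(m) := q(m)\lambda_m - q(m+1)\mu_{m+1}$, the equations for $m\ge2$ read precisely $h(m-1) = h(m)$, while the equation for $m=1$ (where $\mu_1 = 0$ and the $q(0)$-term is absent) reads $h(1) = 0$; therefore $h\equiv0$, i.e.\ $q$ satisfies the same detailed balance $q(k)\lambda_k = q(k+1)\mu_{k+1}$ as $p_s$. With $\lambda_k = 1$ and $\mu_{k+1} = \big((k+1)/k\big)^s$ this recursion telescopes to $q(k) = q(1)\prod_{j=1}^{k-1}\big(j/(j+1)\big)^s = q(1)\,k^{-s}$; since $q$ is a probability mass function, $\sum_{k\ge1} q(1)\,k^{-s} = 1$ forces $q(1) = 1/\zeta(s)$, whence $q = p_s$ and $P = P_s$.

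I expect the main obstacle to be bookkeeping rather than anything conceptual. On the direct side it is pinning down the exact form of the integrability hypothesis under which the boundary term $p_s(N)\big(f(N+1)-f(N)\big)$ vanishes (equivalently, under which the split-and-reindex step is licit). On the converse side it is reading off the correct linear system from the family $\{\mathbf{1}_{\{m\}}\}$ and recognizing that the telescoping quantity is $h(m) = q(m)\lambda_m - q(m+1)\mu_{m+1}$ --- propagated unchanged by the interior equations and set to zero by the boundary equation at $m=1$ --- after which detailed balance, and hence the identification with $\mathrm{Zeta}(s)$, follows at once.
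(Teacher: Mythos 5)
Your argument is correct, and you take a genuinely different, more self-contained route than the paper, particularly for the converse. For the forward direction the paper simply invokes detailed balance and declares the two series cancel, without addressing the rearrangement/convergence issue; your telescoping of the partial sums to $p_s(N)\bigl(f(N+1)-f(N)\bigr)$ and the accompanying discussion of exactly what integrability makes that boundary term vanish is a more careful version of the same idea and buys a clean statement of the hypotheses under which the identity holds. For the converse the paper is terse to the point of not really being a proof: it appeals to uniqueness of the stationary distribution of a positive recurrent birth--death process, a fact that itself requires justification (and in particular requires knowing that $\mathbb{E}_P[L_s f]=0$ for bounded $f$ forces $P$ to be stationary). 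Your approach --- testing against the bounded indicators $\mathbf{1}_{\{m\}}$, reading off the linear system, observing that $h(m)=q(m)\lambda_m - q(m+1)\mu_{m+1}$ is propagated unchanged by the interior equations and killed by the boundary equation, and then solving the resulting detailed balance recursion --- is elementary and self-contained; it needs no imported Markov chain theory. Interestingly, your converse argument is essentially the same one the paper later uses in the proof of Corollary~1.2 (with $f_t(x)=t^x$ playing the role of your indicators), so your proof in effect unifies the two results. One small remark: the theorem as stated says $s>0$, but both arguments (yours and the paper's) of course need $s>1$ for $\zeta(s)$ to be finite; this is a slip in the paper's statement, not in your proof.
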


\begin{proof}
The forward direction follows from the reversibility condition \(p_s(k) = p_s(k+1)\mu_{k+1}\),
which ensures that for all \(f\),
\[
\mathbb{E}_{P_s}\big[L_s f(X)\big]=\sum_{k=1}^\infty p_s(k) L_s f(k)
= \sum_{k=1}^\infty p_s(k)\lambda_k (f(k+1)-f(k))
  + \sum_{k=2}^\infty p_s(k)\mu_k (f(k-1)-f(k)) = 0.
\]
The converse follows from the fact that the stationary distribution of a positive recurrent birth–death process is unique.
\end{proof}
\begin{corollary}\label{cor:char}
For the family of test functions $\{f_t(x) = t^x: t\in[0,1]\}$ and $s>1$ we have
\begin{equation}\label{eq:SOP_PGF}
L_s f_t(1) = t^2 - t,
\qquad
L_s f_t(k)
= \left(t^{k+1}-t^k\right)
+ \left(\frac{k}{k-1}\right)^{s}(t^{k-1}-t^k),
\quad k \ge 2,
\end{equation}
and the class of test functions $\{f_t: t\in[0,1]\}$ is characterizing, i.e. $\mathbb{E}\left[L_s f_t(X)\right] = 0$ for all $t\in[0,1]$ if and only if $X\sim P_s$.
\end{corollary}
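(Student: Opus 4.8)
The plan is as follows. The first assertion of the corollary is a direct substitution: evaluating the Stein operator \eqref{eq:SOP} at $f=f_t$ with $f_t(k)=t^k$, so that $f_t(k\pm1)=t^{k\pm1}$, reproduces \eqref{eq:SOP_PGF} at once. For the characterizing property, the ``only if'' direction is a special case of Theorem \ref{thm:char1}: for $t\in[0,1]$ the function $f_t$ satisfies $0\le f_t\le 1$ on $\mathbb{N}$ and is therefore bounded; moreover the death rates $\mu_k=(k/(k-1))^{s}$ are bounded (they decrease to $1$, with maximum $\mu_2=2^{s}$), so $L_s f_t$ is bounded and $\mathbb{E}[L_s f_t(X)]$ is finite for an arbitrary law on $\mathbb{N}$.

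For the ``if'' direction I would argue through the probability generating function. Write $q_k=\mathbb{P}(X=k)$ and $G(t)=\mathbb{E}[t^X]=\sum_{k\ge1}q_k t^{k}$, which converges at least for $|t|<1$. Since $L_s f_t$ is bounded, the series $\sum_{k\ge1}q_k L_s f_t(k)=\mathbb{E}[L_s f_t(X)]$ converges absolutely and may be rearranged; collecting the terms according to \eqref{eq:SOP_PGF} gives, for $t\in(0,1)$,
\[
\mathbb{E}\big[L_s f_t(X)\big]
=(t-1)\left(G(t)-\frac1t\sum_{k\ge2}q_k\Big(\frac{k}{k-1}\Big)^{s}t^{k}\right).
\]
The auxiliary series $\sum_{k\ge2}q_k(k/(k-1))^{s}t^{k}$ also has radius of convergence at least $1$, since its coefficients are dominated by $2^{s}q_k$. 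Dividing the displayed identity by the nonzero factor $t-1$, the hypothesis $\mathbb{E}[L_s f_t(X)]=0$ for all $t$ becomes an equality of two convergent power series on $(0,1)$; matching coefficients of $t^m$ (after reindexing the second series) yields the recursion $q_m=q_{m+1}\big((m+1)/m\big)^{s}$, i.e. $q_{m+1}=q_m\big(m/(m+1)\big)^{s}$, for every $m\ge1$.

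This recursion telescopes to $q_m=q_1\,m^{-s}$ for all $m\ge1$. Imposing $\sum_{m\ge1}q_m=1$ forces $\sum_{m\ge1}m^{-s}<\infty$, which is exactly where the assumption $s>1$ enters, and gives $q_1=1/\zeta(s)$; hence $q_m=m^{-s}/\zeta(s)=p_s(m)$ and $X\sim P_s$. The only points needing genuine care are the justification that termwise identification of the two power series is legitimate---existence of the relevant expectations, analyticity and radius of convergence, and the identity theorem for power series---together with the bookkeeping of the boundary indices $k=1,2$; the algebra itself is routine.
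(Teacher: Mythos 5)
Your proof is correct and follows essentially the same route as the paper: direct substitution of $f_t(x)=t^x$ into the Stein operator, rewriting $\mathbb{E}[L_s f_t(X)]$ as a factored expression involving the probability generating function $G(t)$ and the auxiliary series $\sum_{k\ge2}q_k\mu_k t^k$, then matching power-series coefficients to obtain the recursion $q_{m+1}\mu_{m+1}=q_m$ and hence $q_m=q_1 m^{-s}$, with normalization pinning down $q_1=1/\zeta(s)$. You supply some analytic bookkeeping that the paper leaves implicit (boundedness of $L_s f_t$ and of the death rates, radius of convergence of the auxiliary series, justification of termwise identification), but the underlying argument and decomposition are identical.
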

\begin{proof}
We apply \eqref{eq:SOP} to $f_t(x)=t^x$, $t\in[0,1]$, which yields directly \eqref{eq:SOP_PGF}. To show that $\{f_t:t\in[0,1]\}$ is characterizing, let $q$ be any probability mass function (pmf) on $\mathbb{N}$ satisfying
$\mathbb{E}_q[(L_s f_t)(X)]=0$ for all $t\in[0,1]$.
Substituting the above expression gives
\[
0=\sum_{k\ge1} q(k)\left(t^{k+1}-t^k + \left(\frac{k}{k-1}\right)^s \left(t^{k-1}-t^k\right)\right).
\]
Define  the generating functions
$G_q(t)=\sum_{k\ge1} q(k)t^k$,
$H_q(t)=\sum_{k\ge1} q(k)\mu_k t^k$.
Then for all $t\in[0,1]$,
\[
0=(1-t)\big(H_q(t)-tG_q(t)\big),
\]
implying $H_q(t)=tG_q(t)$ as an identity of power series.
Matching coefficients gives $q(k)\mu_k=q(k-1)$ for $k\ge2$, hence
\[
q(k)=q(1)\prod_{j=2}^k \frac{1}{\mu_j}
= q(1)\prod_{j=2}^k \left(\frac{j-1}{j}\right)^s
= q(1)k^{-s}.
\]
Normalization yields $q(1)=1/\zeta(s)$, i.e.\ $q(k)=k^{-s}/\zeta(s)$.
Thus the family $\{f_t:t\in[0,1]\}$ uniquely characterizes the $\mathrm{Zeta}(s)$ law.
\end{proof}

Given i.i.d.\ data \(X_1,\dots,X_n\) from an unknown distribution \(P\), we define
the empirical Stein process as
\[
Z_n(t;s) = \frac{1}{\sqrt{n}} \sum_{i=1}^n L_s f_t(X_i), \qquad t\in[0,1].
\]
and to test the composite hypothesis in \eqref{eq:H0} we propose the test statistic
\begin{equation}\label{eq:test_statistic}
T_n = \int_0^1 |Z_n(t;\widehat{s}_n)|^2 w(t)\,{\rm d}t,
\end{equation}
where $\widehat{s}_n$ is a consistent estimator of $s$ and $w$ is a positive weight function satisfying $\int_0^1w(t){\rm d}t<\infty$. Since under the null hypothesis $Z_n$ is close to zero, $T_n$ should produce small values and hence we reject $H_0$ for 'large' values of the test statistic. Since the limit null distribution of $T_n$ clearly depends on the true but unknown value $s_0$ (say) of the Zeta distribution, a resampling technique is necessary to compute suitable critical values.

For  the weight function $w(t)=(1-t)^\beta$, $\beta\ge0$, we have the explicit and integration free representation
\begin{eqnarray*}
T_{n,\beta}&=&\frac2n\sum_{j,k=1}^n\frac{\mathbf{1}\{X_{n,j}=X_{n,k}=1\}}{(3+\beta)(4+\beta)(5+\beta)}\\&&-2\left[\left(\frac{X_{n,j}}{X_{n,j}-1}\right)^{\widehat{s}_n}-\frac{X_{n,j}+1}{\beta+X_{n,j}+4}\right]B(X_{n,j}+1,3+\beta)\mathbf{1}\{X_{n,j}\ge2,X_{n,k}=1\}\\
&&+\left[B(X_{jk}^+-1,3+\beta)\left(\frac{X_{n,j}}{X_{n,j}-1}\right)^{\widehat{s}_n}\left(\frac{X_{n,k}}{X_{n,k}-1}\right)^{\widehat{s}_n}\right.
\\&&\left.-B(X_{jk}^+,3+\beta)\left(\left(\frac{X_{n,j}}{X_{n,j}-1}\right)^{\widehat{s}_n}+\left(\frac{X_{n,k}}{X_{n,k}-1}\right)^{\widehat{s}_n}\right)+B(X_{jk}^++1,3+\beta))\right]\mathbf{1}\{X_{n,j},X_{n,k}\ge2\},
\end{eqnarray*}
where $\mathbf{1}$ denotes the indicator function, $B(\cdot,\cdot)$ is the Beta-function and $X_{jk}^+=X_{n,j}+X_{n,k}$.

The remainder of the paper is structured as follows. In the next section we derive the asymptotic properties of the test statistic by applying a central limit theorem for triangular arrays of Hilbert-space-valued random elements, and we establish both the consistency of the test and the validity of the proposed parametric bootstrap scheme. Section~\ref{sec:Simu} presents a comprehensive Monte Carlo study for the testing problem in \eqref{eq:H0}, and Section~\ref{sec:out} concludes with an outlook and several directions for future research.

\section{Asymptotics}\label{sec:Asy}

In this section we derive asymptotic properties of the proposed tests. Let \(L_{w}^2=L^2([0,1],\mathcal{B}_{[0,1]}, w(t){\rm d}t)\) be the separable Hilbert space (of equivalence classes) of Borel-measurable functions \(g:[0,1] \rightarrow \R\) satisfying $\|g\|_{L^2_w}^2 = \int_{0}^{1}g^2(t)w(t){\rm d}t < \infty$ with respect to the measurable positive weight function $w(\cdot)$. The scalar product on $L_w^2$ is defined by \(\langle g,h\rangle = \int_{0}^{\infty} g(t)h(t)w(t){\rm d}t\). This is a suitable setting, since the test statistic admits the representation
\begin{equation*}
T_n=\|Z_n(\cdot;\widehat{s}_n)\|_{L^2_w}^2.
\end{equation*}

\subsection{Limit null distribution}
Let $X_{n,1},\ldots,X_{n,n}$ be a triangular array of row-wise independent and identically distributed copies of $X_n\sim P_{s_n}$, where $(s_n)$ is a sequence of numbers with $s_n>1$ and $\lim_{n\to\infty}s_n=s_0>1$.
We choose as consistent estimator the maximum-likelihood estimator (MLE) $\widehat{s}_n$,
which satisfies
\[
\frac{\zeta'(\widehat{s}_n)}{\zeta(\widehat{s}_n)} = - \frac{1}{n}\sum_{i=1}^n \log X_i.
\]
Since it produces dependencies in the sum of the empirical Stein process, we provide the Bahadur representation of the estimator and related helping processes that share the same limit distribution. Denote by \(u(x;s) = \partial_s \log p_s(x)=-\log(x)-\zeta'(s)/\zeta(s)\), $s>1$, the Zeta score function and by \(I(s) = \mathrm{Var}(u(X;s))=\zeta''(s)/\zeta(s)-\left(\zeta'(s)/\zeta(s)\right)^2\), $s>1$, the Fisher information. Then applying the Bahadaur representation, see Corollary 10.16 in \cite{H:2024}, we get
\begin{eqnarray}\nonumber
    \sqrt{n}(\widehat{s}_n-s_0)&=&\frac{I(s)^{-1}}{\sqrt{n}}\sum_{j=1}^nu(X_j;s_0) +o_{\mathbb{P}}(1)\\
    &=&-\frac{1}{\sqrt{n}\left(\zeta''(s_0)/\zeta(s_0)-\left(\zeta'(s_0)/\zeta(s_0)\right)^2\right)}\sum_{j=1}^n\left(\log(X_j)+ \frac{\zeta'(s_0)}{\zeta(s_0)}\right)+o_{\mathbb{P}}(1).\label{eq:BRMLE}
\end{eqnarray}
Define the functions
\begin{equation}\label{eq:hs}
    h_s(x,t)=-t(1-t)\mathbf{1}\{x=1\}+(1-t)t^{x-1}\left[\left(\frac{x}{x-1}\right)^{s}-t\right]\mathbf{1}\{x\ge2\},\quad t\in[0,1],
\end{equation}
and
\begin{equation}
    \frac{\partial h_s(x,t)}{\partial s}=(1-t)t^{x-1}\left(\frac{x}{x-1}\right)^{s}\log\left(\frac x{x-1}\right)\mathbf{1}\{x\ge2\}=:g_s(x,t),\quad t\in[0,1].
\end{equation}
Now, consider the helping processes
\begin{equation}
\widetilde{Z}_n(t;s_0)=\frac1{\sqrt{n}}\sum_{j=1}^nh_{s_0}(X_{n,j},t)+g_{s_0}(X_{n,j},t)(\widehat{s}_n-s_0),\quad t\in[0,1],
\end{equation}
and
\begin{equation}
\widehat{Z}_n(t;s_0)=\frac1{\sqrt{n}}\sum_{j=1}^nh_{s_0}(X_{n,j},t)+\mathbb{E}\left[g_{s_0}(X_{n,1},t)\right]I(s_0)^{-1}u(X_{n,j};s_0),\quad t\in[0,1].
\end{equation}

\begin{lemma}\label{lem:asy_equiv}
Under the standing assumptions, we have
\begin{equation*}
\left\|Z_n(\cdot;\widehat{s}_n)-\widetilde{Z}_n(\cdot;s_0)\right\|_{L^2_w}=o_{\mathbb{P}}(1)\quad \mbox{and}\quad \left\|\widetilde{Z}_n(\cdot;s_0)-\widehat{Z}_n(\cdot;s_0)\right\|_{L^2_w}=o_{\mathbb{P}}(1).
\end{equation*}
\end{lemma}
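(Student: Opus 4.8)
The plan is to establish the two asymptotic equivalences separately, in each case controlling an $L^2_w$-norm by a Taylor-type expansion together with moment bounds that are uniform in $t\in[0,1]$. For the first equivalence, note that $Z_n(t;\widehat s_n)=\frac1{\sqrt n}\sum_j h_{\widehat s_n}(X_{n,j},t)$ (after matching the definition of $h_s$ with the Stein operator $L_sf_t$ evaluated in \eqref{eq:SOP_PGF}), so that
\[
Z_n(t;\widehat s_n)-\widetilde Z_n(t;s_0)
=\frac1{\sqrt n}\sum_{j=1}^n\Big(h_{\widehat s_n}(X_{n,j},t)-h_{s_0}(X_{n,j},t)-g_{s_0}(X_{n,j},t)(\widehat s_n-s_0)\Big).
\]
First I would apply a first-order Taylor expansion of $s\mapsto h_s(x,t)$ around $s_0$: since $\partial_s h_s(x,t)=g_s(x,t)$, the bracket equals $\big(g_{\xi_{n,j}}(X_{n,j},t)-g_{s_0}(X_{n,j},t)\big)(\widehat s_n-s_0)$ for some intermediate point $\xi_{n,j}$ between $\widehat s_n$ and $s_0$, and a further expansion in $s$ of $g$ gives a bound of the form $|\partial_s g_{s}(x,t)|\,(\widehat s_n-s_0)^2$. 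The key observation is that $g_s(x,t)$ and $\partial_s g_s(x,t)$ carry a factor $t^{x-1}(1-t)$ times a polynomial in $\log(x/(x-1))\le \log 2$ times $(x/(x-1))^s\le 2^s$; so for $s$ in a fixed neighbourhood of $s_0$ these derivatives are bounded by $C\,t^{x-1}(1-t)$ with $C$ independent of $t$ and $x\ge2$. Hence $\|Z_n(\cdot;\widehat s_n)-\widetilde Z_n(\cdot;s_0)\|_{L^2_w}\le C\,(\widehat s_n-s_0)^2\cdot\frac1{\sqrt n}\sum_j \|t\mapsto t^{X_{n,j}-1}(1-t)\|_{L^2_w}$, and the last average is $O_{\mathbb P}(1)$ since each summand is bounded (the norm of $t^{x-1}(1-t)$ in $L^2_w$ is at most $\|1-t\|_{L^2_w}<\infty$ uniformly in $x$). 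Because $\widehat s_n-s_0=O_{\mathbb P}(n^{-1/2})$ by \eqref{eq:BRMLE}, the product is $O_{\mathbb P}(n^{-1/2}\cdot\sqrt n\cdot n^{-1})=o_{\mathbb P}(1)$; one has to be a little careful that the Taylor point $\xi_{n,j}$ stays in the neighbourhood with probability tending to one, which follows from consistency of $\widehat s_n$.

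For the second equivalence, write
\[
\widetilde Z_n(t;s_0)-\widehat Z_n(t;s_0)
=\frac{\widehat s_n-s_0}{\sqrt n}\sum_{j=1}^n g_{s_0}(X_{n,j},t)
\;-\;\frac1{\sqrt n}\sum_{j=1}^n \mathbb E[g_{s_0}(X_{n,1},t)]\,I(s_0)^{-1}u(X_{n,j};s_0).
\]
Here I would substitute the Bahadur representation \eqref{eq:BRMLE}, $\sqrt n(\widehat s_n-s_0)=I(s_0)^{-1}\frac1{\sqrt n}\sum_j u(X_{n,j};s_0)+o_{\mathbb P}(1)$. The $o_{\mathbb P}(1)$ term contributes $o_{\mathbb P}(1)\cdot\frac1n\sum_j g_{s_0}(X_{n,j},t)$, whose $L^2_w$-norm is $o_{\mathbb P}(1)$ times $\frac1n\sum_j\|g_{s_0}(X_{n,j},\cdot)\|_{L^2_w}=O_{\mathbb P}(1)$ by the same uniform bound as above. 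After this substitution the leading term becomes
\[
\Big(\frac1n\sum_{k=1}^n u(X_{n,k};s_0)\Big)I(s_0)^{-1}\cdot\frac1{\sqrt n}\sum_{j=1}^n g_{s_0}(X_{n,j},t)
\;-\;\mathbb E[g_{s_0}(X_{n,1},t)]\,I(s_0)^{-1}\cdot\frac1{\sqrt n}\sum_{j=1}^n u(X_{n,j};s_0),
\]
and the difference is $I(s_0)^{-1}\big(\frac1{\sqrt n}\sum_j u_j\big)\big(\frac1n\sum_j g_{s_0}(X_{n,j},t)-\mathbb E[g_{s_0}(X_{n,1},t)]\big)$ up to a further $o_{\mathbb P}(1)$ swap. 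Now $\frac1{\sqrt n}\sum_j u(X_{n,j};s_0)=O_{\mathbb P}(1)$ by the CLT (the score has mean zero and variance $I(s_0)<\infty$), while a (functional) law of large numbers shows $\sup_{t}\big|\frac1n\sum_j g_{s_0}(X_{n,j},t)-\mathbb E g_{s_0}(X_{n,1},t)\big|\to 0$ in probability — this is where I would invoke an $L^2_w$-valued weak law of large numbers for the triangular array, using that the $g_{s_0}(X_{n,j},\cdot)$ are i.i.d.\ bounded random elements of the separable Hilbert space $L^2_w$ and that $s_n\to s_0$ controls the negligible drift between $P_{s_n}$ and $P_{s_0}$. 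Putting these together gives the desired $\|\widetilde Z_n-\widehat Z_n\|_{L^2_w}=o_{\mathbb P}(1)$.

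The main obstacle I anticipate is purely technical rather than conceptual: making every bound \emph{uniform in $t\in[0,1]$} and packaging it as a statement about the $L^2_w$-norm, while simultaneously handling the triangular-array setup in which the underlying law $P_{s_n}$ drifts with $n$. Concretely, one must verify that moments such as $\mathbb E_{P_{s_n}}\|g_{s_0}(X_{n,1},\cdot)\|_{L^2_w}^2$ and $\mathbb E_{P_{s_n}}[u(X_{n,1};s_0)^2]$ are bounded uniformly in $n$ (not just finite for each fixed $s$), which requires that $s_n$ eventually lies in a compact neighbourhood of $s_0$ bounded away from $1$ so that $\zeta$ and its derivatives, and the tail of $P_{s_n}$, are under uniform control. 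Once that uniform integrability is in place, the Markov-inequality arguments sketched above go through routinely, and the two displayed $o_{\mathbb P}(1)$ claims follow.
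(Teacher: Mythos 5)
Your proposal follows essentially the same route as the paper's proof: for the first claim, a Taylor expansion of $s\mapsto h_s(x,t)$ around $s_0$ with an explicit envelope bound on the remainder that is integrable in $L^2_w$, followed by a law of large numbers in the Hilbert space; for the second, substitution of the Bahadur representation and a decomposition of $\widetilde Z_n-\widehat Z_n$ into the term $\bigl[\sqrt n(\widehat s_n-s_0)-I(s_0)^{-1}U_n\bigr]G_n$ and the term $I(s_0)^{-1}U_n\bigl(G_n-\bar g\bigr)$, each driven to zero via $U_n=O_{\mathbb P}(1)$ and an $L^2_w$-valued LLN for $G_n$. One small bookkeeping slip in the first part: $\frac{1}{\sqrt n}\sum_{j}\|t\mapsto t^{X_{n,j}-1}(1-t)\|_{L^2_w}$ is not an average and is $O_{\mathbb P}(\sqrt n)$, not $O_{\mathbb P}(1)$; combined with $(\widehat s_n-s_0)^2=O_{\mathbb P}(n^{-1})$ the product is still $O_{\mathbb P}(n^{-1/2})=o_{\mathbb P}(1)$, and the paper avoids the confusion by writing the bound as $C\sqrt n(\widehat s_n-s_0)^2\cdot\frac{1}{n}\sum_j\|B(X_{n,j},\cdot)\|_{L^2_w}$ so that the average really is an average.
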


\begin{proof}
To prove the first statement, write
\[
  Z_n(t;\widehat{s}_n)-\widetilde Z_n(t;s_0)
  = \frac{1}{\sqrt{n}} \sum_{j=1}^n
     R_n(X_{n,j},t),
\]
where $R_n(x,t)= h_{\widehat s_n}(x,t)-h_{s_0}(x,t)-g_{s_0}(x,t)(\widehat s_n-s_0)$.
For each fixed $(x,t)$ the map $s\mapsto h_s(x,t)$ is $C^2$. A second order
Taylor expansion around $s_0$ gives
\[
  R_n(x,t)
  = \frac12(\widehat{s}_n-s_0)^2\,\partial_s^2 h_{\theta_n(x)}(x,t),
\]
for some $\theta_n(x)$ between $s_0$ and $\widehat{s}_n$. For $x\ge2$ we have
\[
  \frac{\partial h_s(x,t)}{\partial_s^2}
  = (1-t)t^{x-1}\Big(\frac{x}{x-1}\Big)^s
      \Bigl[\log\Bigl(\frac{x}{x-1}\Bigr)\Bigr]^2\mathbf 1_{\{x\ge2\}},
\]
while for $x=1$ this derivative is zero since $h_s(1,t)=-t(1-t)$ does not
depend on $s$. Hence there exists $\delta>0$ and a finite constant $C$ such
that, on an event of probability tending to one,
\[
  \bigl|R_n(x,t)\bigr|
  \le C(\widehat{s}_n-s_0)^2 B(x,t),
\]
with
\[
  B(x,t)
  = (1-t)t^{x-1}\Big(\frac{x}{x-1}\Big)^{s_0+\delta}
       \Bigl[\log\Bigl(\frac{x}{x-1}\Bigr)\Bigr]^2\mathbf 1_{\{x\ge2\}}.
\]
By inspection of the tails of the Zeta distribution and the behaviour
$\log\!\bigl(\frac{x}{x-1}\bigr)\sim 1/(x-1)$ as $x\to\infty$, one checks
that $\mathbb{E}\,\bigl\|B(X_{n,1},\cdot)\bigr\|_{L_w^2}<\infty$,
uniformly in $n$. Therefore by the strong law of large numbers in Hilbert spaces $\frac{1}{n}\sum_{j=1}^n \bigl\|B(X_{n,j},\cdot)\bigr\|_{L_w^2}$ converges almost surely to $\mathbb{E}\,\bigl\|B(X_{n,1},\cdot)\bigr\|_{L_w^2}$.

Using the triangle inequality and the bound for $R_n$,
\[
  \bigl\|Z_n(\cdot;\widehat{s}_n)-\widetilde Z_n(\cdot;s_0)\bigr\|_{L_w^2}
  \le C\sqrt{n}(\widehat{s}_n-s_0)^2 \frac{1}{n}
        \sum_{j=1}^n \bigl\|B(X_{n,j},\cdot)\bigr\|_{L_w^2}
\]
By the Bahadur representation \eqref{eq:BRMLE},
$\sqrt{n}\left(\widehat{s}_n-s_0\right) = O_{\mathbb{P}}(1)$, hence since $\widehat{s}_n-s_0=o_{\mathbb{P}}(1)$ we have $\sqrt{n}(\widehat{s}_n-s_0)^2=o_{\mathbb{P}}(1)$. Thus
\[
  \bigl\|Z_n(\cdot;\widehat{s}_n)-\widetilde Z_n(\cdot;s_0)\bigr\|_{L_w^2}
  = o_{\mathbb{P}}(1).
\]

\smallskip
For the second statement, set
\[
  U_n = \frac{1}{\sqrt{n}}\sum_{j=1}^n u(X_{n,j};s_0),\qquad
  G_n(t) = \frac{1}{n}\sum_{j=1}^n g_{s_0}(X_{n,j},t),\qquad
  \bar g(t) = \mathbb{E}\,g_{s_0}(X_{n,1},t).
\]
Then
\[
  \frac{1}{\sqrt{n}}\sum_{j=1}^n g_{s_0}(X_{n,j},t)
  = \sqrt{n}\,G_n(t),
\]
and
\begin{align*}
  \widetilde Z_n(t;s_0)-\widehat Z_n(t;s_0)
   &= (\widehat{s}_n-s_0)\sqrt{n}\,G_n(t)
      - I(s_0)^{-1}\bar g(t)\,U_n \\
   &= \Bigl[\sqrt{n}(\widehat{s}_n-s_0)-I(s_0)^{-1}U_n\Bigr]G_n(t)
      + I(s_0)^{-1}U_n\bigl(G_n(t)-\bar g(t)\bigr).
\end{align*}
Consequently,
\[
  \bigl\|\widetilde Z_n(\cdot;s_0)-\widehat Z_n(\cdot;s_0)\bigr\|_{L_w^2}
  \le A_n + B_n,
\]
where
\[
  A_n = \Bigl|\sqrt{n}(\widehat{s}_n-s_0)-I(s_0)^{-1}U_n\Bigr|\,
         \|G_n\|_{L_w^2},
  \qquad
  B_n = |I(s_0)^{-1}U_n|\,
         \|G_n-\bar g\|_{L_w^2}.
\]

By the Bahadur representation~\eqref{eq:BRMLE},
$\sqrt{n}(\widehat{s}_n-s_0)-I(s_0)^{-1}U_n=o_{\mathbb{P}}(1)$.
Moreover, $g_{s_0}(X_{n,1},\cdot)\in L_w^2$ with finite second moment, so
by the law of large numbers in the Hilbert space $L_w^2$,
\[
  G_n \xrightarrow{\mathbb{P}} \bar g
  \quad\text{in }L_w^2,
\]
which implies $\|G_n\|_{L_w^2}=O_{\mathbb{P}}(1)$ and
$\|G_n-\bar g\|_{L_w^2}\to0$ in probability. Finally, $U_n$ is a
normalized sum of i.i.d.\ variables with variance $I(s_0)$, so
$U_n=O_{\mathbb{P}}(1)$. Hence $A_n = o_{\mathbb{P}}(1)= B_n$ and therefore $\bigl\|\widetilde Z_n(\cdot;s_0)-\widehat Z_n(\cdot;s_0)\bigr\|_{L_w^2} = o_{\mathbb{P}}(1)$.
\end{proof}

For the statement in the next theorem we write
\[
a(t) = \mathbb E_{H_0}g_{s_0}(X_1,t)= \frac{1-t}{\zeta(s_0)} \sum_{k=2}^\infty t^{k-1} \left( \frac{k}{k-1} \right)^{s_0} k^{-s_0} \log\left( \frac{k}{k-1} \right),\quad t\in[0,1].
\]
\begin{theorem}\label{thm:limitnull}
Under the triangular array stated at the beginning of this section, we have as \( n \to \infty \)
\[
T_n \xrightarrow{d} \| Z \|^2_{L^2_w},
\]
where \( Z(\cdot) \) is a centered Gaussian process in $L_w^2$ with covariance kernel
\begin{eqnarray*}
C(s, t) &=& \frac{st (1-s)(1-t)}{\zeta(s_0)}- \frac{a(s)a(t)}{I(s_0)}\\&&
+ \frac{(1-s)(1-t)}{\zeta(s_0)} \sum_{k=2}^\infty k^{-s_0} (st)^{k-1} \left( \left(\frac k {k-1}\right)^{s_0} - s \right)\left( \left(\frac k {k-1}\right)^{s_0} - t \right) , \quad s,t\in[0,1].
\end{eqnarray*}
\end{theorem}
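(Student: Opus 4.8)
\emph{Proof idea.} By Lemma~\ref{lem:asy_equiv} we have $\|Z_n(\cdot;\widehat s_n)-\widehat Z_n(\cdot;s_0)\|_{L^2_w}=o_{\mathbb P}(1)$, so it suffices to show that $\widehat Z_n(\cdot;s_0)\xrightarrow{d}Z$ in $L^2_w$ for a centered Gaussian element $Z$ with the stated covariance kernel; Slutsky's lemma in $L^2_w$ together with the continuous mapping theorem applied to the continuous functional $g\mapsto\|g\|_{L^2_w}^2$ then yields $T_n=\|Z_n(\cdot;\widehat s_n)\|_{L^2_w}^2\xrightarrow{d}\|Z\|_{L^2_w}^2$. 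Writing
\[
\widehat Z_n(t;s_0)=\frac1{\sqrt n}\sum_{j=1}^n\psi_n(X_{n,j},t),\qquad
\psi_n(x,t)=h_{s_0}(x,t)+\mathbb E_{P_{s_n}}\!\bigl[g_{s_0}(X_{n,1},t)\bigr]\,I(s_0)^{-1}u(x;s_0),
\]
the task reduces to a central limit theorem in the separable Hilbert space $L^2_w$ for the row-wise i.i.d.\ triangular array $(\psi_n(X_{n,j},\cdot))_{j\le n}$, and the plan is to verify the hypotheses of the central limit theorem for triangular arrays of Hilbert-space-valued random elements referred to in the Introduction.

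\emph{Integrability.} First I would record that $h_{s_0}$ is uniformly bounded (indeed $|h_{s_0}(1,t)|\le\tfrac14$ and $|h_{s_0}(x,t)|\le(x/(x-1))^{s_0}\le 2^{s_0}$ for $x\ge2$) and that $g_{s_0}$ is likewise bounded, while $|u(x;s_0)|\le\log x+|\zeta'(s_0)/\zeta(s_0)|$ has a finite second moment under every $P_{s_n}$ because $s_n$ stays bounded away from $1$. This gives $\|\psi_n(x,\cdot)\|_{L^2_w}\le C(1+\log x)$ and $\sup_n\mathbb E\|\psi_n(X_{n,1},\cdot)\|_{L^2_w}^2<\infty$, and the Lindeberg-type condition $\mathbb E\bigl[\|\psi_n(X_{n,1},\cdot)\|_{L^2_w}^2\,\mathbf 1\{\|\psi_n(X_{n,1},\cdot)\|_{L^2_w}>\varepsilon\sqrt n\}\bigr]\to0$ then follows by dominated convergence, using $\sup_n\mathbb E_{P_{s_n}}(\log X)^2<\infty$.

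\emph{Centering and covariance.} The asymptotic centering is the delicate point. Applying Corollary~\ref{cor:char} with parameter $s_n$ gives $\mathbb E_{P_{s_n}}h_{s_n}(X_{n,1},t)=0$, so a first-order Taylor expansion in $s$ yields $\mathbb E_{P_{s_n}}h_{s_0}(X_{n,1},t)=(s_0-s_n)\,a(t)+o(s_0-s_n)$; on the other hand $\mathbb E_{P_{s_n}}u(X_{n,1};s_0)=\zeta'(s_n)/\zeta(s_n)-\zeta'(s_0)/\zeta(s_0)=(s_n-s_0)\,I(s_0)+o(s_n-s_0)$, since $\frac{d}{ds}(\zeta'(s)/\zeta(s))=I(s)$. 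Adding the two contributions of $\psi_n$ the linear terms cancel, so $\mathbb E\,\widehat Z_n(t;s_0)=\sqrt n\,o(s_n-s_0)=o(1)$ under the standing assumptions (in particular $\sqrt n(s_n-s_0)=O(1)$, which holds in the bootstrap application by \eqref{eq:BRMLE}; in the special case $s_n\equiv s_0$ the summands are exactly centered). The remainder must be controlled in $L^2_w$, not merely pointwise, which works because the $o(\cdot)$-terms are dominated by fixed $L^2_w$-functions; this cancellation is precisely the role of the Fisher-information-weighted score correction in $\widehat Z_n$. Next, by dominated convergence the covariance operator of $\widehat Z_n$ converges in trace norm to that of a centered Gaussian $Z\in L^2_w$ with kernel $C(s,t)=\mathrm{Cov}_{P_{s_0}}(\psi(X,s),\psi(X,t))$, where $\psi(x,t)=h_{s_0}(x,t)+a(t)I(s_0)^{-1}u(x;s_0)$. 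To identify $C$ I would differentiate the characterization $\mathbb E_{P_s}[h_s(X,t)]=0$ in $s$, obtaining the differential Stein identity $\mathbb E_{P_{s_0}}[u(X;s_0)h_{s_0}(X,t)]=-\mathbb E_{P_{s_0}}g_{s_0}(X,t)=-a(t)$; together with $\mathbb E_{P_{s_0}}u(X;s_0)^2=I(s_0)$ this collapses the four cross-terms to $\mathbb E_{P_{s_0}}[h_{s_0}(X,s)h_{s_0}(X,t)]-a(s)a(t)/I(s_0)$, and splitting the remaining expectation into its $k=1$ and $k\ge2$ parts over the Zeta pmf reproduces exactly the first and third displayed terms of $C$. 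Invoking the Hilbert-space CLT gives $\widehat Z_n\xrightarrow{d}Z$, and the reduction of the first paragraph completes the proof.

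\emph{Main obstacle.} I expect the centering step to be the crux: one must show that the drift $s_n\to s_0$ leaves no residual bias after the first-order cancellation between $\mathbb E_{P_{s_n}}h_{s_0}$ and $\mathbb E_{P_{s_n}}u(\cdot;s_0)$, and that the corresponding remainder tends to zero uniformly in $t\in[0,1]$, i.e.\ in $L^2_w$ rather than just pointwise. Everything else — the uniform boundedness and integrability estimates, the trace-norm convergence of the covariance operators, and the Lindeberg condition — is routine given the explicit geometric and logarithmic tail behaviour of $h_{s_0}$, $g_{s_0}$ and $u$ under the Zeta law.
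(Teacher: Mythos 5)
Your overall architecture coincides with the paper's: reduce to $\widehat Z_n$ via Lemma~\ref{lem:asy_equiv}, invoke the Hilbert-space CLT for triangular arrays, and compute the covariance kernel via the differential Stein identity $\mathrm{Cov}\bigl(u(X;s_0),h_{s_0}(X,t)\bigr)=-a(t)$ coming from differentiating $\mathbb E_{P_s}h_s(X,t)=0$ in $s$. Your covariance calculation, including the collapse of the cross terms to $\mathbb E_{s_0}[h_{s_0}(X,s)h_{s_0}(X,t)]-a(s)a(t)/I(s_0)$, is correct and matches the paper.

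The divergence, and the gap, is in the centering. The paper does not keep $s_0$ inside the summand: it defines
\[
Y_{n,j}(t)=\frac1{\sqrt n}\Bigl(h_{s_n}(X_{n,j},t)+a(t)I(s_n)^{-1}u(X_{n,j};s_n)\Bigr),
\]
i.e.\ it evaluates $h$ and $u$ at the drifting parameter $s_n$. Since $X_{n,j}\sim P_{s_n}$, the identities $\mathbb E_{s_n}h_{s_n}(X_{n,1},\cdot)=0$ and $\mathbb E_{s_n}u(X_{n,1};s_n)=0$ hold exactly for every $n$, so the summands are centered by construction with no Taylor expansion of the bias needed at all; the limit covariance kernel then comes out of dominated convergence using continuity of $s\mapsto h_s$ and $s\mapsto u(\cdot;s)$. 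Your $\psi_n$ instead keeps $h_{s_0}$ and $u(\cdot;s_0)$, so the summands are only asymptotically centered, and the bias-cancellation argument you give forces you to assume $\sqrt n(s_n-s_0)=O(1)$. That is not part of the standing assumptions, which only require $s_n\to s_0$; without it your bound $\mathbb E\widehat Z_n(t;s_0)=\sqrt n\,o(s_n-s_0)$ need not vanish. You flag this honestly as the crux, but the way out is not to strengthen the hypothesis — it is to center the summands at $s_n$ as the paper does, which removes any dependence on the rate of $s_n\to s_0$ and is exactly the version of the statement that the parametric bootstrap validity in Section~\ref{subsec:Boot} needs.
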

\begin{proof}
By Lemma \ref{lem:asy_equiv} and the triangular inequality the limit distribution of the three processes is the same, hence it is enough to focus on $\widehat{Z}_n$. Obiously, we have a sum of row-wise iid. random variables so we apply the central limit theorem for triangular arrays in Hilbert spaces, see Theorem 17.30 in \cite{H:2024}. Define the $L_w^2$-valued random
elements
\[
  Y_{n,j}(t)
  = \frac1{\sqrt n}\left(h_{s_n}(X_{n,j},t)+a(t)I(s_n)^{-1}u(X_{n,j};s_n)\right),
  \qquad t\in[0,1],
\]
where under $H_0$ the $X_{n,j}$ are iid. with law $P_{s_n}$. Then
\[
  \widehat Z_n(\cdot;s_0)
  = \sum_{j=1}^n Y_{n,j}.
\]
By construction and by the identities
$\mathbb E_{s_n}h_{s_n}(X_{n,1},t)=0$ and
$\mathbb E_{s_n}u(X_{n,1};s_n)=0$, we have
$\mathbb E_{s_n}Y_{n,j}=0\in L^2_w$.
From the explicit formulas for $h_s$ and $u(\cdot;s)$ and the tail behaviour
of the Zeta($s$) distribution, one checks exactly as in Lemma \ref{lem:asy_equiv} that
\[
  \sup_{n\ge1}\mathbb E_{s_n}\|h_{s_n}(X_{n,1},\cdot)\|^2<\infty,
  \qquad
  \sup_{n\ge1}\mathbb E_{s_n}u(X_{n,1};s_n)^2<\infty,
\]
and hence
\[
  \sup_{n\ge1}\mathbb E_{s_n}\|Y_{n,1}\|^2<\infty.
\]
Thus the triangular array $\{Y_{n,j}\}$ satisfies the basic assumptions of
Theorem 17.30 in \cite{H:2024}.

Let $S_n=\sum_{j=1}^n Y_{n,j}=\widehat Z_n(\cdot;s_n)$ and let $C_n$ be the
covariance operator of $S_n$ in $L^2_w$. Because the $Y_{n,j}$ are
independent and centred,
\[
  C_n
  = \sum_{j=1}^n \operatorname{Cov}(Y_{n,j})
  = n\,\operatorname{Cov}(Y_{n,1}).
\]
Writing
\[
  W_{s_n}(t)
  = h_{s_n}(X_{n,1},t)+a(t)I(s_n)^{-1}u(X_{n,1};s_n),
\]
we have $Y_{n,1}=W_{s_n}/\sqrt n$, so
\[
  C_n = \operatorname{Cov}(W_{s_n})\qquad(n\ge1).
\]

Fix an orthonormal basis $\{e_k\}_{k\ge1}$ of $L^2_w$ and set $a_{k,\ell}
   = \lim_{n\to\infty} \langle C_n e_k,e_\ell\rangle$, $k,\ell\ge1$,
whenever the limit exists. For each $k,\ell$,
\[
  \langle C_n e_k,e_\ell\rangle
  = \mathbb E_{s_n}\big[\langle W_{s_n},e_k\rangle
                       \langle W_{s_n},e_\ell\rangle\big].
\]
As $s_n\to s_0$ and the maps $(s,x)\mapsto h_s(x,\cdot)$ and
$(s,x)\mapsto u(x;s)$ are continuous, we have
\[
  \langle W_{s_n},e_k\rangle
  \xrightarrow{\;\;}\langle W_{s_0},e_k\rangle
  \quad\text{a.s.},
\]
where $W_{s_0}(t)=h_{s_0}(X_1,t)+a(t)I(s_0)^{-1}u(X_1;s_0)$ and
$X_1\sim P_{s_0}$. Moreover by Cauchy-Schwarz
\[
  \big|\langle W_{s_n},e_k\rangle
        \langle W_{s_n},e_\ell\rangle\big|
  \le \frac12\big(\langle W_{s_n},e_k\rangle^2
                  +\langle W_{s_n},e_\ell\rangle^2\big)
  \le \|W_{s_n}\|^2,
\]
and $\sup_n \mathbb E\|W_{s_n}\|^2<\infty$ by the moment bounds
above. Dominated convergence yields
\[
  \langle C_n e_k,e_\ell\rangle
   \longrightarrow
   \mathbb E_{s_0}\big[\langle W_{s_0},e_k\rangle
                      \langle W_{s_0},e_\ell\rangle\big]
   = a_{k,\ell},
\]
so assumption (a) of Theorem 17.30 in \cite{H:2024} holds. Further,
\[
  \sum_{k=1}^\infty a_{k,k}
  = \mathbb E_{s_0}\|W_{s_0}\|^2<\infty,
\]
which is assumption (b).

For assumption (c), fix an $\epsilon>0$ then the quantity
$L_n(\varepsilon,e_k)$ in Theorem 17.30 in \cite{H:2024} becomes
\[
  L_n(\varepsilon,e_k)
  = \sum_{j=1}^n
      \mathbb E\Big[\langle Y_{n,j},e_k\rangle^2
        \mathbf 1{\{|\langle Y_{n,j},e_k\rangle|>\varepsilon\}}\Big]
  = n\,\mathbb E\Big[\langle Y_{n,1},e_k\rangle^2
        \mathbf 1{\{|\langle Y_{n,1},e_k\rangle|>\varepsilon\}}\Big].
\]
Since $\langle Y_{n,1},e_k\rangle
   = \langle W_{s_n},e_k\rangle/\sqrt n$, we can rewrite
\[
  L_n(\varepsilon,e_k)
  = \mathbb E\Big[\langle W_{s_n},e_k\rangle^2\,
       \mathbf 1{\{|\langle W_{s_n},e_k\rangle|>\varepsilon\sqrt n\}}\Big]
  \xrightarrow[n\to\infty]{} 0
\]
by dominated convergence and the moment bound
$\sup_n\mathbb E\langle W_{s_n},e_k\rangle^2<\infty$.
Thus assumption (c) is also satisfied.

By Theorem 17.30 in \cite{H:2024} there exists a centered Gaussian element
$Z\in\mathbb H$ with covariance operator $C$ determined by
\[
  \langle Cx,e_\ell\rangle
   = \sum_{k=1}^\infty a_{k,\ell}\langle x,e_k\rangle,
  \qquad x\in\mathbb H,\ \ell\ge1,
\]
such that
\[
  \widehat Z_n(\cdot;s_n) = S_n \xrightarrow{d} Z
  \quad\text{in }L_w^2.
\]

By construction,
\[
  C(s,t)
  = \mathbb E_{s_0}\big[W_{s_0}(s)\,W_{s_0}(t)\big],
  \qquad s,t\in[0,1].
\]
With
\[
  W_{s_0}(t)=h_{s_0}(X_1,t)+a(t)I(s_0)^{-1}u(X_1;s_0),
\]
and using $\mathbb E_{s_0}h_{s_0}(X_1,t)=0$,
$\mathbb E_{s_0}u(X_1;s_0)=0$, we obtain
\[
  C(s,t)
  = \mathbb E_{s_0}\big[h_{s_0}(X_1,s)h_{s_0}(X_1,t)\big]
    + a(s)a(t)I(s_0)^{-1}
    + a(s)\,\mathrm{Cov}\!\big(u(X_1;s_0),h_{s_0}(X_1,t)\big)
    + a(t)\,\mathrm{Cov}\!\big(u(X_1;s_0),h_{s_0}(X_1,s)\big).
\]
The Stein identity implies $\mathbb E_s h_s(X_1,t)=0$ for all $s>1$ and
$t\in[0,1]$. Differentiating at $s=s_0$ yields
\[
  0
  = \mathbb E_{s_0}g_{s_0}(X_1,t)
    + \mathbb E_{s_0}\big[h_{s_0}(X_1,t)\,u(X_1;s_0)\big]
  = a(t)+\mathrm{Cov}\!\big(u(X_1;s_0),h_{s_0}(X_1,t)\big),
\]
so $\mathrm{Cov}(u(X_1;s_o),h_{s_0}(X_1,t))=-a(t)$. Inserting this into the last
display gives
\[
  C(s,t)
  = \mathbb E_{s_0}\big[h_{s_0}(X_1,s)h_{s_0}(X_1,t)\big]
    - \frac{a(s)a(t)}{I(s_0)}.
\]
Finally, evaluating the expectation explicitly under the Zeta($s_0$)
distribution using the definition of $h_{s_0}$ in \eqref{eq:hs} leads, after a
straightforward calculation, to the series representation for $C(s,t)$
stated in the theorem.

By Lemma \ref{lem:asy_equiv}, $\|Z_n(\cdot;\widehat{s}_n)-\widehat Z_n(\cdot;s_n)\|_{L_w^2}=o_{\mathbb{P}}(1)$. Hence $Z_n(\cdot;\widehat{s}_n)\xrightarrow{d} Z$ in $L_w^2$, and the
continuous mapping theorem yields
\[
  T_n = \big\|Z_n(\cdot;\widehat{s}_n)\big\|_{L_w^2}^2
  \xrightarrow{d} \|Z\|_{L_w^2}^2,
\]
which concludes the proof.
\end{proof}

\subsection{Consistency of the testing procedure}
We now prove that the test which rejects the hypothesis $H_0$ for large values of $T_n$ is consistent against general alternatives. Hereafter, we consider an iid. sequence $(X_n)_{n\in \N}$ of copies of $X$, where  \(X\) is a non-degenerate positive random variable satisfying \(\mathbb{E}[X^2] < \infty\). Moreover, we assume that there is \(s_0>0\) such that
\begin{align}
\widehat{s}_{n} \stackrel{\text{a.s.}}{\longrightarrow} s_0, \qquad \text{as} \quad n \rightarrow \infty,
\label{consistency_convergence_estimators}
\end{align}
where \(\widehat{s}_{n}\) is the maximum likelihood estimators as in the previous section. The following result is a direct consequence of a Taylor expansion and Fatou's lemma.
\begin{theorem}\label{thm:consistency}
Under the stated conditions, we have
\begin{align*}
\liminf_{n \rightarrow \infty} \frac{T_n}{n} \geq \Lambda_{s_0,w} \qquad \mathbb{P} \textnormal{-a.s.},
\end{align*}
where
\begin{align*}
\Lambda_{s_0,w}= \int_{0}^{1} \mathbb{E}\big[h_{s_0}(X,t)\big]^2 w(t){\rm d}t.
\end{align*}
\end{theorem}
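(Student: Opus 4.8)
The plan is to lower-bound $T_n/n$ by a quantity that converges to $\Lambda_{s_0,w}$ almost surely, using the representation $T_n = \|Z_n(\cdot;\widehat s_n)\|_{L^2_w}^2$ and the fact that $Z_n(t;\widehat s_n) = \sqrt n\cdot\frac1n\sum_{j=1}^n L_{\widehat s_n} f_t(X_j)$, so that
\[
  \frac{T_n}{n} = \int_0^1 \Bigl(\frac1n\sum_{j=1}^n h_{\widehat s_n}(X_j,t)\Bigr)^2 w(t)\,{\rm d}t,
\]
recalling that $L_s f_t(x)$ equals $h_s(x,t)$ up to the sign convention in \eqref{eq:hs}. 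First I would observe that for each fixed $t\in[0,1]$ the summands $h_s(X_j,t)$ are, for $s$ in a neighbourhood of $s_0$, dominated by an integrable envelope: since $\mathbb E[X^2]<\infty$ and $\bigl(\tfrac{x}{x-1}\bigr)^s\le 2^s$ for $x\ge 2$, the map $(x,t)\mapsto h_s(x,t)$ is bounded uniformly in $t$ by a function of $x$ with finite expectation. Combined with the continuity of $s\mapsto h_s(x,t)$ and the almost sure convergence $\widehat s_n\to s_0$ in \eqref{consistency_convergence_estimators}, a standard uniform strong law of large numbers (or simply the ordinary SLLN applied after replacing $\widehat s_n$ by $s_0$ and controlling the remainder via the envelope) gives, for each fixed $t$,
\[
  \frac1n\sum_{j=1}^n h_{\widehat s_n}(X_j,t) \xrightarrow{\text{a.s.}} \mathbb E\bigl[h_{s_0}(X,t)\bigr].
\]

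Next, to pass from pointwise-in-$t$ convergence of the integrand to the bound on the integral, I would invoke Fatou's lemma with respect to the measure $w(t)\,{\rm d}t$ on $[0,1]$: on the almost sure event where the above convergence holds for (say) a countable dense set of $t$, and more carefully on a full-measure set of $t$, one has
\[
  \liminf_{n\to\infty}\frac{T_n}{n}
  = \liminf_{n\to\infty}\int_0^1 \Bigl(\tfrac1n\textstyle\sum_{j=1}^n h_{\widehat s_n}(X_j,t)\Bigr)^2 w(t)\,{\rm d}t
  \ge \int_0^1 \liminf_{n\to\infty}\Bigl(\tfrac1n\textstyle\sum_{j=1}^n h_{\widehat s_n}(X_j,t)\Bigr)^2 w(t)\,{\rm d}t
  = \Lambda_{s_0,w},
\]
which is exactly the claimed inequality. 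The use of $\liminf$ (rather than a full limit) and of Fatou (rather than dominated convergence) is what makes the argument work under the minimal moment assumption $\mathbb E[X^2]<\infty$ without requiring an $L^2_w$-dominating envelope uniformly in $n$.

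The main obstacle is the interchange of the pointwise limit with the integral in a way that is valid almost surely \emph{simultaneously} for all the relevant $t$: the SLLN gives, for each fixed $t$, a null set $N_t$ off which convergence holds, and one must argue that the conclusion survives integration over $t$. The clean way is to apply Fubini/Tonelli to the nonnegative function $(t,\omega)\mapsto \mathbf 1\{\omega\notin N_t\}$ on $[0,1]\times\Omega$ to conclude that, for $\mathbb P$-almost every $\omega$, the convergence $\tfrac1n\sum_j h_{\widehat s_n(\omega)}(X_j(\omega),t)\to\mathbb E[h_{s_0}(X,t)]$ holds for $w$-almost every $t$; on that full-measure event in $\omega$ Fatou's lemma applies directly to the integral in $t$. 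A second, minor point to check is the handling of the estimator: since $\widehat s_n\to s_0$ a.s. and the envelope above is uniform for $s$ in a compact neighbourhood of $s_0$, replacing $\widehat s_n$ by $s_0$ inside the sum changes it by $o(1)$ a.s. for each fixed $t$, so no uniformity in $t$ is needed at this step. Putting these together yields the theorem.
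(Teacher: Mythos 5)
Your proposal is correct and follows exactly the route the paper indicates (the paper only remarks that the result is ``a direct consequence of a Taylor expansion and Fatou's lemma''): you Taylor/mean-value out the estimator using the uniform envelope, apply the SLLN pointwise in $t$, handle the ``for a.e.\ $t$ simultaneously'' issue via Fubini, and finish with Fatou. One small simplification you could note is that $h_s(x,t)$ is in fact bounded uniformly in $(x,t)$ for $s$ in a compact neighbourhood of $s_0$ (since $(1-t)t^{x-1}\le 1$ and $(x/(x-1))^s\le 2^s$), so the SLLN step does not actually need the standing assumption $\mathbb{E}[X^2]<\infty$; that assumption is only relevant elsewhere (e.g.\ for the behaviour of $\widehat s_n$).
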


\begin{remark}
Under a fixed alternative, the MLE $\widehat{s}_n$ converges almost surely to $s_0$ if the limit
\[
s_0=\arg\max_{s>1}\mathbb{E}\left[\log p_s(X)\right]
=\arg\max_{s>1}\mathbb{E}\left[-s\log X-\log\zeta(s)\right]
\]
exists and is unique, where $X$ follows the true distribution $P$.
This holds when $\mathbb{E}[|\log X|]<\infty$ and the Kullback--Leibler divergence between
$P$ and $\mathrm{Zeta}(s)$ is minimized at a unique $s_0>1$.
\end{remark}
\subsection{Parametric Bootstrap procedure}\label{subsec:Boot}
Since the limit null distribution of $T_n$ in Theorem \ref{thm:limitnull} depends on the unknown parameter $s_0>1$ of the underlying
Zeta distribution, we propose a parametric bootstrap procedure to obtain critical values.
For a sample $X_1,\ldots,X_n$ satisfying the assumptions above, we compute the MLE
$\widehat{s}_n=\widehat{s}_n(X_1,\ldots,X_n)$.

We then generate a bootstrap sample of size $n$, say $X_1^*,\ldots,X_n^*$, following the
$\mathrm{Zeta}(\widehat{s}_n)$ distribution, estimate the parameter $s$ from $X_1^*,\ldots,X_n^*$
(denote it $\widehat{s}_n^*$), and calculate the test statistic $T_n^*$.

By repeating this procedure $b$ times, we obtain $T_{n,1}^*,\ldots,T_{n,b}^*$ and compute
the empirical distribution function
\[
H_{n,b}^*(t)=\frac{1}{b}\sum_{i=1}^b\mathbf{1}\{T_{n,i}^*\le t\},\qquad t\ge 0.
\]

Given the nominal level $\alpha\in[0,1]$, we use the empirical $(1-\alpha)$-quantile:
\[
c_{n,b}^*(\alpha)=H_{n,b}^{*-1}(1-\alpha)=\begin{cases}
T_{b(1-\alpha):b}^*, & b(1-\alpha)\in\mathbb{N},\\[0.2em]
T_{\lfloor b(1-\alpha)\rfloor+1:b}^*, & \text{otherwise,}
\end{cases}
\]
where $T_{1:b}^*,\ldots,T_{b:b}^*$ are the order statistics. We reject $H_0$ if $T_n>c_{n,b}^*(\alpha)$.

Denote the distribution function of $T_n$ under $\mathrm{Zeta}(s)$ by
$H_{n,s}(t)=\mathbb{P}_s(T_n\le t)$, and the limit distribution by
$H_s(t)=\mathbb{P}(\|Z\|_{L^2_w}^2\le t)$, where $Z$ is the centered Gaussian element
from the limit null distribution.

The function $H_s$ is continuous and strictly monotone. By consistency of $\widehat{s}_n$
and continuity of $H_s$, for each $t\ge 0$,
\[
\lim_{n\to\infty}H_{n,\widehat{s}_n}(t)=H_{s_0}(t)\quad\mathbb{P}\text{-a.s.}
\]

By a triangular version of the bootstrap argument (as in \cite{H:1996}, Theorem 3.6), we have
\[
\sup_{t\ge 0}\left|H_{n,b}^*(t)-H_{n,\widehat{s}_n}(t)\right|\xrightarrow{\mathbb{P}}0
\quad\text{as }b,n\to\infty.
\]

Thus, $c_{n,b}^*(\alpha)\xrightarrow{\mathbb{P}}H_{n,\widehat{s}_n}^{-1}(1-\alpha)$ as $b\to\infty$.

If $X_1,\ldots,X_n$ are i.i.d.\ from $\mathrm{Zeta}(s_0)$, the continuity of $H_{s_0}$ yields
\[
\lim_{n\to\infty}\lim_{b\to\infty}\mathbb{P}(T_n>c_{n,b}^*(\alpha))=\alpha.
\]

If $X_1,\ldots,X_n$ do not follow a Zeta distribution, then by Theorem 2.3,
$\Lambda_{s_0,w}>0$, so
\[
\lim_{n\to\infty}\lim_{b\to\infty}\mathbb{P}(T_n>c_{n,b}^*(\alpha))=1.
\]

Thus the test is consistent against any fixed alternative distribution satisfying
the stated assumptions.

\begin{remark}\label{rem:KLE}
    Since the limit random element in Theorem \ref{thm:limitnull} is a centered Gaussian process taking values in $L^2_w$ with known covariance kernel $C(s,t)$, $s,t\in[0,1]$ it is well-known by the Karhunen-Loève expansion (see \cite{DW:2014}, Chapter 3), that the distribution of $\|Z\|^2_{L^2_w}$ corresponds to $\sum_{j=1}^\infty \lambda_j(s_0) N_j^2$, where $N_j$ are iid. standard normal random variables and $(\lambda_j(s_0))_{j\in\N}$ is a sequence of decaying positive eigenvalues defined by the (linear second-order homogeneous Fredholm) integral equation
\begin{equation*}\label{int:eq}
\lambda f(s) = \mathcal{C} f(s), \quad s \in [0,1],
\end{equation*}
where $\mathcal{C}: L^2_w \mapsto L^2_w$ is defined as
\[
\mathcal{C} f(s) = \int_0^1 C(s,t) f(t) w(t)\mbox{d}t,
\]
corresponding to the covariance kernel $C$ of $Z$ in Theorem \ref{thm:limitnull}. The eigenvalues obviously depend on the true unknown parameter $s_0>1$ as well as the weight function $w(\cdot)$. By the complexity of the kernel $C$ it seems hopeless to find an analytical solution to the eigenvalue problem, although numerical procedures as the Rayleigh-Ritz method could be applied \citep{EJM:2025}.
\end{remark}

\begin{remark}
    Defining the feature map $\Phi_s(x)(t)=h_s(x,t)$ and the Stein kernel $k_s(x,y)=\langle\Phi_s(x),\Phi_s(y)\rangle_{L^2_w}$, we have for
    $T_n$ in \eqref{eq:test_statistic} a $V$-statistic $T_n=n^{-1}\sum_{i,j=1}^nk_{\widehat{s}_n}(X_j,X_k)$, which shows that $T_n$ is a special maximum mean discrepancy (MMD)-type statistic with a Stein-type kernel, and because $k_s$ is defined as an inner product of features, it is automatically positive definite. For more details on this type of test statistics see \cite{KGBF:2025} and for related theory \cite{BRB:2025}.
\end{remark}

\section{Simulations}\label{sec:Simu}

In this section we provide (to the best of our knowledge first) comparative Monte Carlo Simulation study for testing the fit to the Zeta family of distributions. In each simulation run, we simulate a data set $X_1,\ldots,X_n$ of sample size $n=100$, and fix the significance level to $5\%$. Every entry in Table \ref{tab:100} is based on 10000 replications. To ensure an 'apples-to-apples' comparison, we employ for all the considered testing procedures the same resampling procedure. Since the parametric bootstrap in Section \ref{subsec:Boot} is computationally intensive, we employ the warp-speed
Monte Carlo method of \citet{GPW:2013}. Hence, for each simulated dataset we compute
the test statistic on the original sample and on one bootstrap resample, and then use the
resulting pairs to estimate rejection probabilities as the number of Monte Carlo
replications grows.

There are relatively few goodness-of-fit tests specifically designed for
nonnegative integer-valued random variables. We adopt the approaches mentioned in the introduction to
compare the finite sample performance of the test statistics.  \citet{H:1996} proposed a test
statistic for parametric families of discrete distributions, based on a
comparison between the empirical distribution function and the cumulative
distribution function with estimated parameters.

Let us briefly describe the test of \citet{H:1996}. We present it here in its
specific form for the Zeta distribution with unknown parameter $s$. Let
$X_1,\ldots,X_n$ be an i.i.d.\ sequence of random variables following the
Zeta$(s)$ distribution, and let $\widehat{F}_n$ denote the empirical
distribution function based on this sample. Furthermore, let $\widehat{s}_n$ be
a consistent estimator of the parameter $s > 1$, and let $F(t;\widehat{s}_n)$
be the cumulative distribution function of the Zeta$(\widehat{s}_n)$
distribution.

We consider two asymptotically equivalent versions of the Cram\'er--von Mises
test statistic, namely
\[
    C_n^{a}
    = n \sum_{k=1}^{\ell} \bigl(\widehat{F}_n(k) - F(k;\widehat{s}_n) \bigr)^2
      \PP[X = k; \widehat{s}_n] + R(\ell),
\]
where $\PP[X = k; \widehat{s}_n]$, $k \geq 1$, is the Zeta$(\widehat{s}_n)$
probability mass function, and $R(\ell)$ is a remainder term, and
\[
    C_{n}^{e}
    = n \sum_{k=1}^{\infty} \bigl(\widehat{F}_n(k) - F(k;\widehat{s}_n) \bigr)^2
      \bigl(\widehat{F}_n(k) - \widehat{F}_n(k-1) \bigr),
\]
where
\[
 \widehat{F}_n(k) - \widehat{F}_n(k-1)
 = \frac{1}{n} \sum_{j=1}^{n} \boldsymbol{1}\{X_j = k\}, \quad k \geq 1,
\]
is the empirical probability mass function. The remainder term $R(\ell)$ of
$C_n^{a}$ depends on $\ell$ and on the true value of $s$. \citet{H:1996}
recommends choosing $\ell$ sufficiently large, in particular at least so large
that $n\bigl(1 - F(\ell;\widehat{s}_n)\bigr) \leq 10^{-4}$. Note that, for $C_n^{e}$, there are at most finitely many nonzero summands,
namely at most $M = \max\{X_1,\ldots,X_n\}$, and there is no need to choose any
upper limit $\ell$ for the sum, in contrast to $C_n^{a}$.

The test of \citet{Yang:2018} cannot be directly compared with our procedure, since it is
developed for a finite sample space and a simple null hypothesis with fully specified
probabilities. In order to obtain a method that is applicable to the Zeta distribution with
unknown parameter $s$, we adapt their kernelized Stein discrepancy by plugging in an
estimate $\widehat{s}_n$ of $s$ into the discrete Stein operator and working on the finite
support $\{1,\dots,K_n\}$, where $K_n = \max_{1\le i\le n} X_i$. Define the forward and
backward neighbours
\[
  \tau(x) =
  \begin{cases}
    x+1, & x < K_n,\\
    1,   & x = K_n,
  \end{cases}
  \qquad
  \rho(x) =
  \begin{cases}
    x-1, & x > 1,\\
    K_n, & x = 1,
  \end{cases}
\]
and the discrete score
\[
  s_{\widehat{s}_n}(x)
  = 1 - \Bigl(\frac{x}{\tau(x)}\Bigr)^{\widehat{s}_n}, \qquad x\in\{1,\dots,K_n\}.
\]
Let $k$ be a positive definite kernel on $\{1,\dots,K_n\}$. The corresponding Stein
kernel is
\[
\begin{aligned}
  \kappa_{\widehat{s}_n}(x,x')
  &= s_{\widehat{s}_n}(x)\,k(x,x')\,s_{\widehat{s}_n}(x')
   - s_{\widehat{s}_n}(x)\,\bigl(k(x,x') - k(x,\rho(x'))\bigr) \\
  &\quad
   - \bigl(k(x,x') - k(\rho(x),x')\bigr)\,s_{\widehat{s}_n}(x')
   + \bigl(k(x,x') - k(\rho(x),x') - k(x,\rho(x')) + k(\rho(x),\rho(x'))\bigr),
\end{aligned}
\]
and the Yang-type kernel Stein discrepancy with estimated parameter is given by the
degenerate U-statistic
\begin{equation}\label{eq:test_Yang18}
  S_{\mathrm{KSD}}
  = \frac{1}{n(n-1)} \sum_{j\neq i}^n
      \kappa_{\widehat{s}_n}(X_i,X_j).
\end{equation}
We implement a Gaussian kernel $k(x,x')=\exp(- (x-x')^2/2)$.

The goodness-of-fit test proposed by \citet{M:2009} is based on the Mellin
transform and was originally developed for both continuous and discrete Pareto-type
laws under a simple null hypothesis. In the discrete case, the null hypothesis is quivalent to \eqref{eq:H0} and the test statistic involves the difference between the empirical Mellin transform and
its theoretical counterpart. For a sample $X_1,\dots,X_n$ and a weight function
$w_\beta(t) = \exp(-\beta t)$ with $\beta>0$, the empirical Mellin transform is
\[
  M_n(t) = \frac{1}{n}\sum_{j=1}^n X_j^{-t}, \qquad t\ge 0.
\]

Adapting the discrete-Zeta version of \citet{M:2009} to the composite hypothesis
with unknown parameter $s$, we plug in an estimator $\widehat{s}_n$ of $s$ into the test
statistic. Using the integral representation (cf.\ formulas (1.3) and (3.1) in
\cite{M:2009}), the resulting Meintanis statistic is
\begin{equation}\label{eq:test_Meintanis}
  Z_{n,\beta}
  \;=\;
  n \int_0^\infty
    \Bigl[
       \zeta^2(\widehat{s}_n)\,M_n^2(t)
       + \zeta^2(\widehat{s}_n + t)
       - 2\,\zeta(\widehat{s}_n)\,\zeta(\widehat{s}_n + t)\,M_n(t)
    \Bigr]
    e^{-\beta t}\,dt,
\end{equation}
where $\zeta(\cdot)$ denotes the Riemann zeta function. In practice, the integral in
\eqref{eq:test_Meintanis} is evaluated numerically for each bootstrap sample and for each
chosen value of the tuning parameter $\beta$.

Finally, we adapt the characterization-based approach of \citet{BEN:2022}
to the Zeta family. For $s>2$ the Zeta pmf $p_s(k)=k^{-s}/\zeta(s)$ satisfies the
Stein-type identity
\[
  \mathbb{P}(X=k)
  =
  \E\!\left[
    \Bigl(1-\Bigl(\frac{X}{X+1}\Bigr)^{s}\Bigr)\mathbf{1}_{\{X\ge k\}}
  \right],\qquad k\in\mathbb{N},
\]
and deviations from this identity can be used to measure lack of fit. Given a sample
$X_1,\dots,X_n$ and an estimator $\widehat{s}_n$ of $s$, we define the empirical pmf
$\rho_n(k)=n^{-1}\sum_{j=1}^n \mathbf{1}_{\{X_j=k\}}$ and
\[
  e_n(k;\widehat{s}_n)
  =
  \frac{1}{n}\sum_{j=1}^n
    \Bigl(1-\Bigl(\frac{X_j}{X_j+1}\Bigr)^{\widehat{s}_n}\Bigr)
    \mathbf{1}_{\{X_j\ge k\}},
  \qquad k\in\mathbb{N}.
\]
The BEN test statistic is the squared $L^2$–distance
\[
  S_{\mathrm{BEN}}(\widehat{s}_n)
  =
  \sum_{k=1}^{M_n} \bigl(e_n(k;\widehat{s}_n)-\rho_n(k)\bigr)^2,
  \qquad M_n = \max_{1\le j\le n} X_j,
\]
which vanishes under the model and becomes large when the characterization is violated.
Critical values are obtained by a parametric bootstrap from the fitted Zeta distribution
$p_{\widehat{s}_n}$, exactly in analogy with the procedures proposed by \citet{BEN:2022}.

\begin{remark}
The general characterization in Theorem 3.1 of \citet{BEN:2022} requires a finite first moment
(cf.\ condition (C3)), so its assumptions are rigorously satisfied for the Zeta
family only in the parameter range $s>2$. For $1<s\le 2$ the Zeta law has infinite mean, so
condition (C3) is violated and the formal applicability of the theorem is no longer
guaranteed. Nevertheless, the above identity continues to hold at a heuristic level and
the corresponding statistic remains well defined, although a rigorous asymptotic theory in this
heavy-tailed regime would require additional arguments beyond those of
\citet{BEN:2022}.
\end{remark}

We consider several alternatives to explore the power of the test, all supported on
$\mathbb{N}$.  First, for $s>2$ we write
$\mathrm{Geom}(s)$ for the geometric distribution on $\{1,2,\dots\}$ with success
probability $p_s$ chosen such that its mean coincides with that of $\mathrm{Zeta}(s)$,
that is $1/p_s = \E[Z] = \zeta(s-1)/\zeta(s)$, $Z\sim\mathrm{Zeta}(s)$.  The notation Zipf$(s,N)$ refers to the
truncated Zipf distribution on $\{1,\dots,N\}$ with pmf
\[
  \mathbb{P}(X=k) = \frac{k^{-s}}{\sum_{j=1}^N j^{-s}}, \qquad k=1,\dots,N.
\]
For $s>1$, $N\in\mathbb{N}$ and
$p_{\mathrm{g}}\in(0,1)$, the Zeta/Geometric splice $\mathrm{ZG}(s,N,p_{\mathrm{g}})$
coincides with $\mathrm{Zeta}(s)$ on $\{1,\dots,N\}$ and has a shifted geometric tail
beyond $N$, i.e.\ for $k>N$ the probabilities are proportional to
$(1-p_{\mathrm{g}})^{k-N-1}p_{\mathrm{g}}$, with a global normalizing constant chosen
so that the pmf sums to one.  Conversely, for $s>1$ and $N\in\mathbb{N}$ the
Geometric/Zeta splice $\mathrm{GZ}(s,N)$ has a geometric probability mass on $\{1,\dots,N\}$ and a
shifted Zeta tail beyond $N$, so that for $k>N$ the probabilities are proportional to
$(k-N)^{-s}$ and the overall pmf is properly normalized.  Finally, the zigzag
alternatives $\mathrm{Zigzag}(s,\varepsilon)$, with $s>1$ and $|\varepsilon|<1$, are
defined by the pmf
\[
  \mathbb{P}(X=k) \propto k^{-s}\bigl(1+\varepsilon(-1)^k\bigr), \qquad k\in\mathbb{N},
\]
which corresponds to alternating up- and down-perturbations of the Zeta$(s)$ pmf.

The results of the simulation study are presented in Table \ref{tab:100}. The first result we can see is that no
test statistic uniformly dominates the others, which is not surprising, since there can't exist an omnibus testing procedure as is shown by \cite{J:2000}. Overall, all considered tests maintain the nominal significance level reasonably well, keeping the empirical size close to $0.05$. For benchmark alternatives such as the Poisson, binomial, discrete uniform, and negative binomial distributions, all procedures achieve rejection rates close to $100\%$, so these cases are not reported in Table~\ref{tab:100}. In general, most tests exhibit good power when deviations from the Zeta law occur in the left tail, where the probabilities are relatively large. Somewhat surprisingly, the test of \citet{Yang:2018} is the strongest procedure against the Geometric alternative when $s=2.5$, but its power drops essentially to zero when $s=3$ or $s=3.5$. A decline in power for the Geometric alternative is visible for all competitors as $s$ increases; in contrast, our statistics $T_{n,k}$ retain comparatively high power because, for larger $s$ (and thus larger $p_s$), the probabilities at $1$ and $2$ are quite substantial, and our test is particularly sensitive to deviations from the Zeta model at small values of $X$. All tests fail to detect the Geometric right-tail alternatives if the mixture split between the Zeta and Geometric components occurs at $k=10$. When the split is at $k=5$, the competing procedures perform visibly better, while our tests still achieve rejection rates only slightly above the nominal level; this is consistent with the fact that our test gives relatively small weight to discrepancies in $p(k)=\mathbb P(X=k)$ for large $k$, and suggests that alternative weightings might be explored in future work. On the other hand, all tests perform very well when the split is reversed, that is, when the Geometric component governs the left part of the distribution and the Zeta component the right tail. The truncated Zeta (Zipf) distribution is well detected if the truncation point is close to zero; however, all of our statistics show a relatively rapid loss of power as the ratio $N/s$ decreases, that is, when the truncation is so close to one that observations with substantial probability mass are no longer visible. Finally, the new statistic $T_{n,\beta}$ clearly outperforms the other procedures for the Zigzag alternatives, which again reflects the particular strength of our test in situations where the relative frequencies of small count values deviate from those implied by the Zeta distribution.

\begin{table}[t]
\setlength{\tabcolsep}{1.25mm}
\centering
\begin{tabular}{l|rrrrrrrrrrrrrrr}
 Alt. & $T_{n,0}$ & $T_{n,1}$ & $T_{n,2}$ & $T_{n,3}$ & $T_{n,4}$ & $T_{n,5}$ & $Z_{n,1}$ & $Z_{n,2}$ & $Z_{n,3}$ & $Z_{n,4}$ & $Z_{n,5}$ & $Z_{n,6}$ & $C_n^e$& $S_{\mathrm{KSD}}$ & $S_{\mathrm{BEN}}$\\
  \hline
  Zeta(1.5) & 5 & 5 & 5 & 5 & 5 & 5 & 5 & 4 & 4 & 4 & 5 & 5 & 5 & 5 & 5 \\
  Zeta(1.75) & 5 & 5 & 5 & 5 & 5 & 5 & 4 & 4 & 4 & 4 & 4 & 4 & 5 & 5 & 5 \\
  Zeta(2) & 5 & 5 & 5 & 4 & 4 & 5 & 4 & 4 & 4 & 4 & 4 & 4 & 5 & 5 & 4 \\
  Zeta(2.25) & 5 & 5 & 5 & 5 & 5 & 5 & 4 & 4 & 3 & 3 & 3 & 4 & 4 & 5 & 4 \\ \hline
  Geom(2.5) & 93 & 93 & 92 & 91 & 91 & 90 & 99 & 99 & 99 & 99 & 99 & 99 & 100 & 100 & 99 \\
  Geom(3) & 77 & 76 & 75 & 75 & 75 & 74 & 73 & 69 & 66 & 64 & 64 & 67 & 80 & 0 & 73 \\
  Geom(3.5)& 48 & 48 & 48 & 48 & 48 & 48 & 37 & 34 & 30 & 29 & 27 & 27 & 45 & 0 & 39 \\
ZG(3,10,0.2) & 4 & 4 & 5 & 4 & 4 & 5 & 3 & 3 & 3 & 3 & 3 & 3 & 4 & 6 & 4 \\
  ZG(3,10,0.4) & 4 & 5 & 5 & 5 & 5 & 5 & 3 & 3 & 2 & 2 & 2 & 3 & 4 & 7 & 4 \\
  ZG(3,10,0.6) & 5 & 5 & 5 & 5 & 5 & 5 & 3 & 3 & 2 & 2 & 2 & 3 & 4 & 7 & 5 \\
  ZG(3,10,0.8) & 5 & 5 & 5 & 5 & 5 & 5 & 3 & 3 & 3 & 2 & 3 & 3 & 4 & 8 & 4 \\
  ZG(2,5,0.2) & 5 & 5 & 5 & 5 & 5 & 5 & 12 & 14 & 18 & 22 & 26 & 32 & 9 & 42 & 7 \\
  ZG(2,5,0.4) & 6 & 5 & 6 & 5 & 5 & 5 & 25 & 29 & 33 & 41 & 47 & 51 & 18 & 90 & 24 \\
  ZG(2,5,0.6) & 6 & 6 & 6 & 6 & 6 & 6 & 32 & 36 & 41 & 47 & 54 & 59 & 27 & 99 & 49 \\
  ZG(2,5,0.8) & 6 & 6 & 6 & 6 & 6 & 5 & 38 & 42 & 47 & 53 & 60 & 66 & 35 & 100 & 74 \\
  ZG(2,30,0.4) & 5 & 5 & 5 & 5 & 5 & 5 & 4 & 4 & 5 & 7 & 9 & 12 & 4 & 17 & 5 \\
  GZ(2.5,5) & 93 & 92 & 91 & 91 & 90 & 90 & 99 & 99 & 99 & 99 & 99 & 100 & 100 & 96 & 99 \\
  GZ(2.5,10) & 93 & 92 & 91 & 91 & 90 & 90 & 99 & 99 & 99 & 99 & 99 & 100 & 100 & 100 & 99 \\
  GZ(2.5,20) & 93 & 92 & 92 & 91 & 91 & 90 & 99 & 99 & 99 & 99 & 99 & 99 & 99 & 100 & 98 \\
  GZ(2.5,50) & 94 & 93 & 92 & 92 & 91 & 91 & 99 & 99 & 99 & 99 & 99 & 99 & 100 & 100 & 99 \\
  Zipf(1.5,10) & 28 & 27 & 26 & 25 & 24 & 24 & 84 & 87 & 90 & 93 & 95 & 97 & 84 & 100 & 75 \\
  Zipf(1.75,10) & 18 & 17 & 17 & 16 & 16 & 16 & 58 & 61 & 65 & 70 & 75 & 79 & 54 & 100 & 42 \\
  Zipf(2,10) & 13 & 13 & 13 & 13 & 13 & 13 & 33 & 35 & 36 & 41 & 46 & 52 & 28 & 64 & 22 \\
  Zipf(2.25,10) & 8 & 8 & 8 & 8 & 8 & 8 & 16 & 16 & 16 & 18 & 23 & 26 & 14 & 20 & 12 \\
  Zipf(2.5,10) & 7 & 7 & 7 & 7 & 7 & 7 & 8 & 8 & 8 & 8 & 10 & 11 & 8 & 11 & 7 \\
  Zipf(3,10) & 5 & 5 & 5 & 5 & 5 & 5 & 4 & 3 & 3 & 3 & 3 & 3 & 4 & 8 & 5 \\
  Zipf(1.5,20) & 15 & 14 & 14 & 13 & 13 & 13 & 70 & 78 & 85 & 90 & 95 & 98 & 67 & 100 & 45 \\
  Zipf(1.75,20) & 10 & 10 & 10 & 9 & 9 & 9 & 36 & 42 & 48 & 55 & 61 & 69 & 30 & 99 & 20 \\
  Zipf(2,5) & 33 & 31 & 31 & 30 & 30 & 29 & 61 & 61 & 61 & 62 & 66 & 69 & 59 & 28 & 53 \\
  Zipf(2.25,5) & 21 & 20 & 20 & 20 & 20 & 20 & 40 & 40 & 39 & 39 & 41 & 45 & 39 & 8 & 33 \\
  Zipf(2.5,5) & 14 & 14 & 14 & 14 & 13 & 13 & 23 & 22 & 21 & 21 & 22 & 25 & 22 & 6 & 18 \\
  Zipf(3,5) & 8 & 8 & 8 & 7 & 7 & 7 & 7 & 7 & 7 & 7 & 7 & 7 & 8 & 5 & 7 \\
  Zigzag(1.75,0.1) & 12 & 12 & 12 & 12 & 12 & 12 & 8 & 8 & 10 & 11 & 9 & 5 & 7 & 5 & 11 \\
  Zigzag(1.75,0.5) & 96 & 96 & 96 & 96 & 96 & 96 & 89 & 85 & 75 & 56 & 27 & 12 & 93 & 20 & $\ast$\\
  Zigzag(2,0.1) & 11 & 12 & 12 & 12 & 12 & 12 & 8 & 8 & 9 & 10 & 11 & 12 & 9 & 7 & 11 \\
  Zigzag(2,0.5) & 96 & 96 & 97 & 97 & 97 & 97 & 87 & 84 & 83 & 78 & 66 & 45 & 93 & 25 & 98 \\
  Zigzag(2.5,0.1) & 10 & 10 & 10 & 10 & 10 & 11 & 7 & 7 & 6 & 6 & 6 & 8 & 10 & 6 & 9 \\
  Zigzag(2.5,0.5) & 95 & 95 & 95 & 95 & 95 & 95 & 81 & 76 & 74 & 75 & 76 & 74 & 91 & 26 & 95 \\
  Zigzag(3,0.1) & 9 & 9 & 9 & 9 & 9 & 9 & 6 & 5 & 5 & 5 & 5 & 5 & 8 & 5 & 7 \\
  Zigzag(3,0.5) & 90 & 90 & 90 & 90 & 90 & 90 & 70 & 64 & 59 & 59 & 61 & 63 & 84 & 5 & 88 \\
\end{tabular}
\caption{Empirical rejection rates for the different test statistics for a sample size of $n=100$ and a significance level of $5\%$. Every entry is based on 10000 replications. The entry with $\ast$ could not be computed due to numerical instability.}\label{tab:100}
\end{table}

\section{Comments and Outlook}\label{sec:out}
We have proposed a new family of test statistics for assessing the composite hypothesis that an integer–valued data set on $\N$ arises from an unspecified member of the zeta family. We derived the asymptotic distribution of the corresponding test and established the consistency of a parametric bootstrap approximation. Moreover, we conducted what appears to be the first comprehensive Monte Carlo study for this testing problem: although \cite{M:2009} outlines a bootstrap procedure, simulation results are only reported for the simple null hypothesis. Our experiments demonstrate that the proposed tests are strong competitors to existing methods. In addition, we extended the kernelized Stein discrepancy approach of \cite{Yang:2018} to the composite setting and introduced, in the spirit of \cite{BEN:2022}, a previously unavailable characterization-based test tailored to the zeta distribution.

We finalize the article by pointing out some open research directions. In Remark \ref{rem:KLE} we pointed out the connection of the limit null distribution to a weighted sum of $\chi^2_1$ distributed random variables. To use the Rayleigh-Ritz method one needs an orthonormal basis of $L^2_w$, which depends on the underlying weight function. In the considered case $w(t)=(1-t)^\beta$, $\beta\ge0$, a starting point would be the normalized shifted Jacobi polynomials
\begin{equation*}
    \varphi_{n,\beta}(x)=\sqrt{2n+\beta+1}\sum_{k=0}^n (-1)^k \binom{n+\beta}{n-k}\binom{n}{k}(1-t)^k\, t^{\,n-k}, \quad t \in[0,1].
\end{equation*}
could lead to a precise approximation of the eigenvalues of the covariance operator, see \cite{EJM:2025} for the methodology and examples of a variety of weighted $L^2$ spaces, covariance kernels of Gaussian processes, and other families of orthogonal polynomials.

In the simulation study we exclusively employed the maximum likelihood estimator for the shape parameter of the Zeta distribution. Since \citet{DKO:1990} indicate that the choice of estimation procedure can substantially affect the power of goodness-of-fit tests, it would be of interest to explore alternative estimators, such as minimum distance estimators or generalized method-of-moments type procedures inspired by Stein’s method as in \citet{BEK:2021,EFGPS:2025}.

Obviously, the Zeta distribution is the infinite version of the truncated Zipf distribution, so it would be interesting to consider the testing problem of fit to the truncated Zipf family, either for fixed truncation parameter $N$ or unknown $N$. The corresponding rates of the birth- and death process in anology to the derivation in the introduction would then stay the same but the index $k$ is restricted to $\{1,\ldots,N\}$. Another interesting generalization of the Zeta distribution is the so called Lerch distribution, which can also be expressed as stationary limit distribution of a birth- and death process, for details see \cite{KPH:2010}.

\section*{Acknowledgement}
The work of DH was supported by the Czech Science Foundation project GAČR No. 25-15844S.

\bibliographystyle{apalike}

\bibliography{lit_Zipf}

\end{document}